\let\mod=\undefined
\DeclareMathOperator{\End}{End} \DeclareMathOperator{\soc}{soc}
\DeclareMathOperator{\Ext}{Ext} \DeclareMathOperator{\Tor}{Tor}
\DeclareMathOperator{\op}{op} 
\DeclareMathOperator{\add}{add} \DeclareMathOperator{\Hom}{Hom}
 \DeclareMathOperator{\ind}{ind}
 \DeclareMathOperator{\mod}{mod}
\DeclareMathOperator{\rad}{rad}
\DeclareMathOperator{\Tr}{Tr}
\def\gldim{\text{\rm gl.\,dim}\,}
\def\pd{\text{\rm pd}}
\def\id{\text{\rm id}}
\newtheorem{theorem}{Theorem}
\newtheorem{lemma}[theorem]{Lemma}
\numberwithin{equation}{section}
\title[Tilted algebras and short chains of modules]{Tilted algebras and short chains of modules}
\author{Alicja Jaworska}
\address{Faculty of Mathematics and Computer Science, Nicolaus Copernicus University, Chopina 12/18, 87-100 Toru\'n, Poland}
\email{jaworska@mat.uni.torun.pl}
\thanks{The research supported by the Research Grant N N201 269135 of the Polish Ministry of Science and Higher Education.}
\author{Piotr Malicki}
\address{Faculty of Mathematics and Computer Science, Nicolaus Copernicus University, Chopina 12/18, 87-100 Toru\'n, Poland}
\email{pmalicki@mat.uni.torun.pl}
\author{Andrzej Skowro\'nski}
\address{Faculty of Mathematics and Computer Science, Nicolaus Copernicus University, Chopina 12/18, 87-100 Toru\'n, Poland}
\email{skowron@mat.uni.torun.pl}
\subjclass[2010]{16E10, 16G10, 16G70}
\dedicatory{Dedicated to Idun Reiten on the occasion of her 70th birthday.}
\begin{document}
\date{}
\maketitle
\begin{abstract}
We provide an affirmative answer for the question raised almost twenty years ago in \cite{[RSS]} concerning the characterization of tilted artin algebras 
by the existence of a sincere finitely generated module which is not the middle of a short chain.
\end{abstract}
\section{Introduction}
\noindent Let $A$ be an artin algebra over a commutative artin
ring $R$, that is, $A$ is an $R$-algebra (associative, with
identity) which is finitely generated as an $R$-module. We denote
by $\mod A$ the category of finitely generated right $A$-modules,
by $\ind A$ the full subcategory of $\mod A$ formed by the
indecomposable modules, and by $K_0(A)$ the Grothendieck group of
$A$. Further, we denote by $D$ the standard duality $\Hom_R(-,E)$
on $ \mod A$, where $E$ is a minimal injective cogenerator in
$\mod R$. For a module $X$ in $\mod A$ and its minimal projective
presentation $\xymatrix@C=13pt{P_1 \ar[r]^f &  P_0 \ar[r] &X
\ar[r] &0}$ in $\mod A$, the transpose $\Tr X$ of $X$ is the
cokernel of the map $\Hom_A(f, A)$ in $\mod A^{\op}$, where
$A^{\op}$ is the opposite algebra of $A$. Then we obtain the
homological operator $\tau_A=D\Tr$ on modules in $\mod A$, called
the Auslander-Reiten translation, playing a fundamental role in
the modern representation theory of artin algebras. A module $M$
in $\mod A$ is said to be sincere if every simple right $A$-module
occurs as a composition factor of $M$. Finally, following
\cite{[Bon]}, \cite{[HRi1]}, $A$ is a tilted  algebra if $A$ is an
algebra of the form $\End_H(T)$, where $H$ is a hereditary artin
algebra and $T$ is a tilting module in $\mod H$, that is,
$\Ext^1_H(T,T)=0$ and the number of pairwise nonisomorphic
indecomposable direct summands of $T$ is equal to the rank of
$K_0(H)$.

The aim of the article is to establish the following characterization of tilted algebras. 
\begin{theorem} \label{thm1}
An artin algebra $A$ is a tilted algebra if and only if there
exists a sincere module $M$ in $\mod A$ such that, for any module
$X$ in $\ind A$, we have $\Hom_A(X,M)=0$ or $\Hom_A(M,\tau_AX)=0$.
\end{theorem}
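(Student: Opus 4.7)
The theorem has two implications, of which the ``only if'' direction is the standard one and the converse is where the real work lies.

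For the ``only if'' direction, write $A = \End_H(T)^{\op}$ with $H$ a hereditary artin algebra and $T_H$ a tilting module. The connecting component of $\Gamma_A$ contains a complete slice $\Sigma$ (the image under $\Hom_H(T,-)$ of the indecomposable injective $H$-modules), and I would take $M$ to be the direct sum of the modules on $\Sigma$. Sincerity follows because the slice supports every simple $A$-module. The absence of short chains $X \to M_i \to \tau_A X$ is a familiar consequence of the torsion pair $(\CX(T),\CY(T))$ on $\mod A$: predecessors of $\Sigma$ in $\ind A$ lie in $\CY(T)$, their $\tau_A$-translates lie in $\CX(T)$, and $\Hom_A(\CY(T),\CX(T))=0$, so one of $\Hom_A(X,M)$ and $\Hom_A(M,\tau_A X)$ must vanish.

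The ``if'' direction is the substantive content. Given $M = M_1 \oplus \cdots \oplus M_r$ sincere and not the middle of any short chain, I would first argue that each summand $M_i$ is a directing module. Indeed, any cycle $M_i \to X_1 \to \cdots \to X_s \to M_i$ can be combined with the almost split sequence at an intermediate $X_j$ to produce a short chain with $M_i$ as middle term, contradicting the hypothesis. This upgrades the pointwise condition to the structural conclusions that each $\End_A(M_i)$ is a division ring, $\Ext^1_A(M_i,M_j)=0$ for all $i,j$, and each $M_i$ lies in a generalized standard AR-component $\CC_i$ admitting a section through $M_i$.

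Next I would show that all the $M_i$ lie in a single component $\CC$ of $\Gamma_A$, exploiting sincerity of $M$ together with the restriction that each component of a directing module has only finitely many simples as composition factors along its section. The section $\Delta$ of $\CC$ passing through the $M_i$'s is then faithful (immediate from sincerity of $M$) and a candidate for a complete slice. The heart of the proof is to show that the number of $\tau_A$-orbits of $\Delta$ equals the rank of $K_0(A)$: this is where the short-chain avoidance must be fully exploited along chains of irreducible maps connecting the $M_i$'s to neighbouring vertices of the section, using both sincerity to rule out missing orbits and the directing property to rule out superfluous ones. Once a faithful complete slice in $\Gamma_A$ is produced, the classical Liu--Ringel--Skowro\'nski characterization of tilted algebras concludes the proof.

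The principal obstacle, in my view, is exactly the middle step: translating a hypothesis about a single sincere module at the level of $\Hom$ vanishings into the global, component-wide conclusion that a complete section is present. I would expect the proof to proceed by a careful combinatorial analysis of walks in $\Gamma_A$, iterating the short-chain hypothesis to propagate the ``slice-like'' behaviour of the $M_i$ to their AR-neighbours, until the entire section is accounted for.
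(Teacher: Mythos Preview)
Your ``only if'' direction is essentially the paper's argument, though your one-line justification is slightly garbled: it is not that $\tau_A X$ automatically lies in $\CX(T)$, but rather that $\Hom_A(M,\tau_A X)\neq 0$ forces $\tau_A X$ to be a successor of some $\tau_A^{-1}M''$ and hence in $\CX(T)$, while $\Hom_A(X,M)\neq 0$ forces $\tau_A X\in\CY(T)$; the contradiction is $\tau_A X\in\CX(T)\cap\CY(T)$.

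For the ``if'' direction your proposal diverges completely from the paper and contains a genuine gap. The paper does \emph{not} attempt to build a complete slice directly from $M$. Instead it first invokes \cite{[RSS]} to conclude that $\gldim A\leq 2$ and every indecomposable has projective or injective dimension at most one, so $A$ is quasitilted by Happel--Reiten--Smal{\o}. Assuming $A$ is not tilted, the Happel--Reiten dichotomy forces $A$ to be quasitilted of canonical type, and then the Lenzing--Skowro\'nski structure theory gives a separating family $\mathcal{T}^A$ of semiregular tubes. The proof is a case analysis on this tubular family (stable tubes only; ray tubes with projectives; coray tubes with injectives; both), using a key lemma about ray/coray tubes to force $M$ into $\add(\mathcal{P}^A)$ or $\add(\mathcal{Q}^A)$, and finally eliminating the tubular and wild almost concealed canonical cases via explicit arguments with sectional paths and Baer's theorem on wild hereditary algebras.

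Your direct approach has two problems. First, the claim that each $M_i$ is directing is not justified: from a cycle $M_i\to X_1\to\cdots\to M_i$ you do not obviously manufacture a short chain $Z\to M\to\tau_A Z$; the hypothesis controls $\Hom_A(M,\tau_A X)$, not $\Hom_A(X,M_i)$ along arbitrary paths, and ``combining with an almost split sequence at an intermediate $X_j$'' does not produce the required configuration. Second, and more seriously, the step you yourself flag as the principal obstacle---showing that the section through the $M_i$ is complete, with the right number of $\tau_A$-orbits---is precisely the difficulty that kept the question open for two decades (the partial results of Bakke and Ringel cited in the introduction are exactly attempts along your lines). You have restated the problem rather than solved it; the paper's contribution is to bypass this obstacle entirely via the quasitilted classification and heavy structure theory of canonical-type algebras.
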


We note that, by \cite{[AR]}, \cite{[RSS]},  a sequence
$\xymatrix@C=13pt{X \ar[r] & M \ar[r] & \tau_AX}$ of nonzero
homomorphisms in a module category $\mod A$ with $X$ being
indecomposable is called a short chain, and $M$ the middle of this
short chain. Therefore, the theorem asserts that an artin algebra
$A$ is a  tilted algebra if and only if $\mod A$ admits a sincere
module $M$ which is not the middle of a short chain. Hence, the
theorem provides an affirmative answer for the question raised in
\cite[Section 3]{[RSS]}. We would like to mention that some
related partial results have been proved by C.M. Ringel
\cite[p.376]{[Ri1]} and \O. Bakke \cite{[Bak]}.

The tilted algebras play a prominent role in the modern
representation theory of algebras and have attracted much
attention (see  \cite{[ASS]}, \cite{[MS]}, \cite{[Ri1]},
\cite{[Ri2]}, \cite{[SS1]}, \cite{[SS2]} and their cited papers).
In particular, the following handy criterion for an algebra to be
tilted has been established independently in \cite{[L2]},
\cite{[S1]}: an indecomposable artin algebra $A$ is a tilted
algebra if and only if the Auslander-Reiten quiver $\Gamma_A$ of
$A$ admits a component $\mathcal{C}$  with a faithful section
$\Delta$ such that  $\Hom _A(X, \tau_AY)=0$ for all modules $X$
and $Y$ in $\Delta$. We refer also to \cite{[JMS]}, \cite{[L2]},
\cite{[S1]}, \cite{[S2]}, \cite{[S3]} for characterizations of
distinguished classes of tilted algebras via properties of
components of the Auslander-Reiten quivers and to \cite{[S4]} for
a homological characterization of tilted algebras. The tilted
algebras are also crucial for the classification of finite
dimensional selfinjective algebras of finite growth over an
algebraically closed field. Namely, the indecomposable algebras in
this class are, up to Morita equivalence, socle deformations of
the orbit algebras $\widehat{B}/G$ of the repetitive algebras
$\widehat{B}$ of tilted algebras $B$ of Dynkin and Euclidean types
and admissible infinite cyclic groups $G$ of automorphisms of
$\widehat{B}$ (see the survey article \cite{[S5]}). Further, the
tilted algebras allow also to recover selfinjective artin algebras
whose Auslander-Reiten quiver admits components of a prescribed
form. For example, it has been shown in \cite{[SY1]} (see also
\cite{[SY2]}, \cite{[SY3]}) that, if the Auslander-Reiten quiver
of an indecomposable selfinjective artin algebra $A$ admits a
generalized standard (in the sense of \cite{[S2]}) acyclic
component, then $A$ is Morita equivalent to a socle deformation of
an orbit algebra $\widehat{B}/G$ of the repetitive algebra
$\widehat{B}$ of a tilted algebra $B$ not of Dynkin type and an
admissible infinite cyclic group $G$ of automorphisms of
$\widehat{B}$.

The tilted algebras belong to a wider class of algebras formed by
the quasitilted algebras, introduced by D. Happel, I. Reiten and
S. O. Smal{\o}  in \cite{[HRS2]}, which are the endomorphism
algebras $\End_{\mathcal{H}}(T)$ of tilting objects $T$ in
hereditary abelian categories $\mathcal{H}$. It has been shown in
\cite[Theorem 2.3]{[HRS2]} that these are exactly the artin
algebras $A$ of global dimension at most $2$  and with every
module in $\ind A$ of projective dimension or injective dimension
at most $1$. Further, by a result of D. Happel and I. Reiten
\cite{[HRe]}, every quasitilted artin algebra is a tilted algebra
or a  quasitilted algebra of canonical type. Moreover, by a result
of H. Lenzing and A. Skowro\'{n}ski \cite{[LS]}, the quasitilted
algebras of canonical type are the artin algebras whose
Auslander-Reiten quiver admits a separating family of semiregular
(ray or coray) tubes. The key step in our proof of the theorem is
to show that the module category $\mod A$ of a quasitilted but not
tilted algebra $A$ does not admit a sincere module which is not
the middle of a short chain, applying results from \cite{[Bae]},
\cite{[LP]}, \cite{[LS]}, \cite{[M]}, \cite{[Ri1]}, \cite{[Ri3]}.

For background on the representation theory applied here we refer
to \cite{[ASS]}, \cite{[ARS]}, \cite{[Ri1]}, \cite{[SS1]},
\cite{[SS2]}.

\section{Preliminaries}

In this section we briefly recall some of the notions we will use
and present an essential ingredient of the proof of the main
theorem of this article. This is concerned with relationship
between semiregular tubes and sincere modules which are not the
middle of a short chain.

Let $A$ be an artin algebra over a commutative artin ring $R$. We
denote by $\Gamma_A$ the Auslander-Reiten quiver of $A$. Recall
that $\Gamma_A$ is a valued translation quiver whose vertices are
the isomorphism classes $\{X\}$ of modules $X$ in $\ind A$, the
valued arrows of $\Gamma_A$ describe minimal left almost split
morphisms with indecomposable domain and minimal right almost
split morphisms with indecomposable codomain, and  the translation
is given by the Auslander-Reiten translations $\tau_A=D\Tr$ and
$\tau^-_A=\Tr D$. We shall not distinguish between a module $X$ in
$\ind A$ and the corresponding vertex $\{X\}$ of $\Gamma_A$.
Following \cite{[S2]},  a component  $\mathcal{C}$ of $\Gamma_A$
is said to be generalized standard if $\rad^{\infty}_A(X,Y)=0$
 for all modules $X$ and $Y$ in $\mathcal{C}$, where
 $\rad^{\infty}_A$is the infinite Jacobson radical of $\mod A$.
 Moreover, two components $\mathcal{C}$ and $\mathcal{D}$ of
 $\Gamma_A$ are said to be orthogonal if $\Hom_A(X,Y)=0$ and
 $\Hom_A(Y,X)=0$ for all modules $X$ in $\mathcal{C}$ and $Y$ in
 $\mathcal{D}$. A family $\mathcal{C}=(\mathcal{C}_i)_{i \in I}$
 of components of $\Gamma_A$ is said to be (strongly) separating
 if the components in $\Gamma_A$ split into three disjoint
 families $\mathcal{P}^A$, $\mathcal{C}^A= \mathcal{C}$ and
 $\mathcal{Q}^A$ such that the following conditions are
 satisfied:
 \begin{itemize}
 \item[] (S1) $ \mathcal{C}^A$ is a sincere family of pairwise orthogonal
 generalized standard com-
 \item[] \hspace{7mm} ponents;
 \item[] (S2) $\Hom_A(\mathcal{Q}^A,
 \mathcal{P}^A)=0$, $\Hom_A(\mathcal{Q}^A, \mathcal{C}^A)=0$,
 $\Hom_A(\mathcal{C}^A,
 \mathcal{P}^A)=0$;
 \item [] (S3) any morphism from $\mathcal{P}^A$ to $\mathcal{Q}^A$ in
 $\mod A$ factors through $\add (\mathcal{C}_i)$ for 
 \item[] \hspace{7mm} any $i \in I$.
 \end{itemize}
We then say that $\mathcal{C}^A$ separates $\mathcal{P}^A$ from
$\mathcal{Q}^A$ and write
$$\Gamma_A=\mathcal{P}^A \vee
\mathcal{C}^A \vee \mathcal{Q}^A.$$
 A component $\mathcal{C}$ of
$\Gamma_A$ is said to be preprojective if $\mathcal{C}$ is acyclic
(without oriented cycles) and each module in $\mathcal{C}$ belongs
to the $\tau_A$-orbit of a projective module. Dually,
$\mathcal{C}$ is said to be preinjective if $\mathcal{C}$ is
acyclic and each module in $\mathcal{C}$ belongs to the
$\tau_A$-orbit of an injective module. Further, $\mathcal{C}$ is
called regular if $\mathcal{C}$ contains neither a projective
module nor an injective module. Finally, $\mathcal{C}$ is called
semiregular if $\mathcal{C}$ does not contain both a projective
module and an injective module. By a general result of S. Liu
\cite{[L0]} and Y. Zhang \cite{[Z]}, a regular component
$\mathcal{C}$ contains an oriented cycle if and only if
$\mathcal{C}$ is a stable tube, that is, an orbit  quiver
$\mathbb{ZA}_{\infty}/(\tau^r)$, for some $r \geq 1$. Important
classes of semiregular components with oriented cycles are formed
by the ray tubes, obtained from stable tubes by a finite number
(possibly empty) of ray insertions, and the coray tubes obtained
from stable tubes by a finite number (possibly empty) of coray
insertions (see \cite{[Ri1]}, \cite{[SS2]}).

The following characterizations of ray and coray tubes have been
established by S. Liu in \cite{[L1]}.
\begin{theorem}
Let $A$ be an artin algebra and $\mathcal{C}$ be a semiregular
component of $\Gamma_A$. The following equivalences hold.
\begin{enumerate}
\renewcommand{\labelenumi}{\rm(\roman{enumi})}
\item  $\mathcal{C}$ contains an oriented cycle but no injective module if and only if $\mathcal{C}$ is a~ray tube.
\item $\mathcal{C}$ contains an oriented cycle but no projective module if and only if $\mathcal{C}$ is a~coray tube.
\end{enumerate}
\end{theorem}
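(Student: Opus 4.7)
I would prove part~(i) in detail and deduce (ii) from it by applying the result to the opposite algebra $A^{\op}$: this operation reverses the arrows of the Auslander--Reiten quiver, exchanges projectives with injectives, and interchanges ray tubes with coray tubes, so semiregular components without projectives in $\Gamma_{A^{\op}}$ correspond to semiregular components without injectives in $\Gamma_A$.

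For the ``$\Leftarrow$'' direction of (i), I would argue constructively. A ray tube $\mathcal{C}$ is obtained from a stable tube $\mathcal{T}\cong\mathbb{ZA}_{\infty}/(\tau^r)$ by a finite (possibly empty) sequence of ray insertions. The $\tau$-orbits in $\mathcal{T}$ provide oriented cycles, and these survive in $\mathcal{C}$ because ray insertions only modify isolated rays without disconnecting the existing cyclic orbits. Every newly inserted module lies on a ray starting at a projective and extending to infinity, so no inserted module is injective; the stable part $\mathcal{T}$ itself has no injectives either. Hence $\mathcal{C}$ contains an oriented cycle but no injective module.

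For the ``$\Rightarrow$'' direction, I would split the argument according to whether $\mathcal{C}$ is regular. If $\mathcal{C}$ is regular, then the theorem of Liu and Zhang quoted just before the statement forces $\mathcal{C}$ to be a stable tube, which is a ray tube with empty insertion sequence. Otherwise $\mathcal{C}$ contains a projective module. Here I would invoke Liu's theory of left and right degrees of irreducible maps together with the general structural theorem asserting that a component of $\Gamma_A$ carrying an oriented cycle contains either a projective-injective module or an irreducible map of finite left or finite right degree. Semiregularity of $\mathcal{C}$ and the absence of injectives exclude the projective-injective case, while the shape analysis associated with finite-right-degree maps (which attaches an injective successor in a semiregular component) excludes that option as well. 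Hence some irreducible map $f\colon X\to Y$ in $\mathcal{C}$ has finite left degree. A further structural lemma of Liu then identifies the local configuration around $Y$ as exactly the one produced by a ray insertion, and permits an ``inverse ray insertion'' yielding a semiregular component $\mathcal{C}'$ still containing an oriented cycle and no injective, but with strictly fewer isomorphism classes of indecomposable projective modules. Since $A$ has only finitely many indecomposable projectives, the procedure terminates after finitely many steps at a regular component, which by the first subcase is a stable tube. Reading the induction backwards exhibits $\mathcal{C}$ as a ray tube.

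The principal obstacle is the inductive reduction in the last subcase: one must use Liu's degree calculus to confirm that, in a semiregular component without injectives and with oriented cycles, each projective sits at the top of a ray whose removal really is an inverse ray insertion and preserves semiregularity, the absence of injectives, and the existence of an oriented cycle. Once this propagation step is in hand, the ``$\Leftarrow$'' direction, the regular subcase, and the duality giving (ii) are essentially bookkeeping.
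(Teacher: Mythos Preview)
The paper does not give its own proof of this theorem: it is stated as background and attributed to S.~Liu \cite{[L1]}, with the sentence ``The following characterizations of ray and coray tubes have been established by S.~Liu in \cite{[L1]}'' immediately preceding the statement. So there is no in-paper proof to compare your proposal against.

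That said, your outline is broadly faithful to the strategy Liu actually uses. The duality reducing (ii) to (i), the easy direction, and the regular subcase via the Liu--Zhang stable-tube theorem are all standard. For the non-regular subcase your idea of using the degree calculus to locate a ray-insertion configuration at each projective and then peel it off is the right heuristic, and it matches the spirit of \cite{[L1]}. The genuinely delicate point, which you correctly flag as the principal obstacle, is verifying that in a left-stable component with oriented cycles the finite-left-degree analysis forces the local picture near a projective to be precisely that of a ray insertion, and that the inverse operation stays inside the class of valued translation quivers arising from an artin algebra (or at least from a translation quiver to which the Liu--Zhang theorem applies). Liu handles this not quite by literal ``inverse ray insertion'' on the module category but by working with the left-stable translation quiver and showing it embeds in $\mathbb{Z}\Delta$ for a suitable $\Delta$; the presence of oriented cycles then forces $\Delta = \mathbb{A}_\infty$ and periodicity, from which the ray-tube description follows. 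Your inductive removal picture is morally equivalent but would need the combinatorial details of \cite{[L1]} to be made rigorous.
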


The following lemma will play an important role in the proof of
our main theorem.

\begin{lemma}\label{lem1}
Let $A$ be an algebra and $M$ a sincere module in $\mod A$ which
is not the middle of a short chain. Then the following statements
hold.
\begin{enumerate}
\renewcommand{\labelenumi}{\rm(\roman{enumi})}
\item $\Hom_A(M,X)=0$ for any $A$-module $X$ in $\mathcal{T}$, where $\mathcal{T}$ is an arbitrary ray tube of $\Gamma_A$ containing a projective module.
\item $\Hom_A(X,M)=0$ for any $A$-module $X$ in $\mathcal{T}$, where $\mathcal{T}$ is an arbitrary coray tube of $\Gamma_A$ containing an injective module.
\end{enumerate}
\end{lemma}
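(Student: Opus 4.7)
I prove statement (i); statement (ii) then follows by applying (i) to the opposite algebra $A^{\op}$ and to $DM\in\mod A^{\op}$, using that the standard duality $D$ carries sincere modules to sincere modules, transforms short chains through $M$ in $\mod A$ into short chains through $DM$ in $\mod A^{\op}$ (via $D\tau_A\cong\tau_{A^{\op}}^{-1}D$), and converts coray tubes with injectives in $\Gamma_A$ into ray tubes with projectives in $\Gamma_{A^{\op}}$.

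Argue by contradiction: assume $\Hom_A(M,X)\neq 0$ for some indecomposable $X\in\mathcal{T}$, and aim to exhibit an indecomposable $Y$ with $\Hom_A(Y,M)\neq 0$ and $\Hom_A(M,\tau_A Y)\neq 0$, producing a short chain $Y\to M\to\tau_A Y$ forbidden by hypothesis. Two immediate reductions are available. First, since $M$ is sincere and $P=eA$ is the projective in $\mathcal{T}$, the simple top of $P$ is a composition factor of $M$, so $\Hom_A(P,M)\cong Me\neq 0$; composing with a nonzero map $M\to X$ yields $\Hom_A(P,X)\neq 0$, and because the semiregular tube $\mathcal{T}$ is generalized standard, this morphism factors through a path of irreducible maps $P=Y_0\to Y_1\to\cdots\to Y_n=X$ entirely inside $\mathcal{T}$. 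Second, by Theorem~2.1 the ray tube $\mathcal{T}$ contains no injective module, so $\tau^{-1}_A$ is defined throughout $\mathcal{T}$; applying the short-chain hypothesis to $W=\tau^{-1}_A X\in\mathcal{T}$ (for which $\tau_A W=X$) immediately forces $\Hom_A(W,M)=0$.

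The heart of the argument is now to exploit the shape of $\mathcal{T}$ to pinpoint the desired $Y$ along, or among the $\tau^-$-translates of, the above path. By definition $\mathcal{T}$ is obtained from a stable tube by a sequence of ray insertions with the projective $P$ sitting at the base of one such ray, and by Liu's theorem $\mathcal{T}$ contains an oriented cycle; generalized standardness together with this combinatorial structure controls which modules in $\mathcal{T}$ admit morphisms from $M$ and which admit morphisms to $M$. The two endpoint conditions $\Hom_A(P,M)\neq 0$ and $\Hom_A(M,X)\neq 0$, coupled with the constraint $\Hom_A(\tau^{-1}_AX,M)=0$, must then force the simultaneous nonvanishing of $\Hom_A(Y,M)$ and $\Hom_A(M,\tau_A Y)$ at some intermediate module $Y$ in $\mathcal{T}$, completing the contradiction.

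The main obstacle is precisely this last combinatorial step. One must analyse separately the case when $X$ lies in the $\tau$-periodic backbone of $\mathcal{T}$ (where iterating the vanishing $\Hom_A(\tau^{-k}_AX,M)=0$ around the finite orbit must be reconciled with $\Hom_A(P,M)\neq 0$ via the path through $\mathcal{T}$) and the case when $X$ lies on an inserted ray (where the $\tau^-$-orbit of $X$ is infinite and one has to propagate the constraints up the ray to meet the projective region around $P$). In both cases the argument uses decisively the ray-tube hypothesis, rather than that of a general semiregular component, through Liu's theorem and the explicit form of ray insertions.
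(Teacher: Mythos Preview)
Your plan has genuine gaps and misses the key device. First, two of your ``reductions'' are unjustified: the tube $\mathcal{T}$ is not assumed to be generalized standard in the lemma, so you cannot invoke that to factor maps through paths of irreducibles; and the composite $P\to M\to X$ need not be nonzero even when both factors are. More importantly, you never carry out the ``heart of the argument''---you correctly identify it as the obstacle but leave it unresolved, and the propagation scheme you sketch (iterating $\Hom_A(\tau_A^{-k}X,M)=0$ around an orbit) does not obviously terminate in a contradiction without further input.

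The missing idea is that $M$ is not merely sincere but \emph{faithful}: by \cite[Corollary 3.2]{[RSS]}, a sincere module which is not the middle of a short chain is faithful, so there is a monomorphism $P\hookrightarrow M^r$. This replaces your weak condition $\Hom_A(P,M)\neq 0$ by something far stronger, and the paper exploits it via the geometry of the tube rather than by propagation. Concretely: take the infinite sectional path $\Omega$ from $X$ to infinity and the infinite sectional path $\Sigma$ from infinity to $P$; they intersect at some $Y$, and if $Z$ is the immediate successor of $Y$ on $\Sigma$ then $\tau_AZ$ is the immediate predecessor of $Y$ on $\Omega$. Since irreducible maps along rays in $\mathcal{T}$ are monomorphisms, the composite $X\to\tau_AZ$ along $\Omega$ is a monomorphism, so $\Hom_A(M,X)\neq 0$ gives $\Hom_A(M,\tau_AZ)\neq 0$. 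On the other side, the composite $Z\to P$ along $\Sigma$ is nonzero by Bautista--Smal{\o} \cite{[BS]}, and composing with the embedding $P\hookrightarrow M^r$ yields $\Hom_A(Z,M)\neq 0$. This produces the short chain $Z\to M\to\tau_AZ$ directly, with no case analysis and no standardness hypothesis.
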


\begin{proof}
We shall prove only (i), because the proof of (ii) is dual. Assume
that $\Hom_A(M,X)\neq 0$ for some $A$-module $X$ from
$\mathcal{T}$. Since $\mathcal{T}$ admits at least one projective
$A$-module, we conclude that there exists a projective $A$-module
$P$ in $\mathcal{T}$ lying on an oriented cycle. Let $\Sigma$ be
the sectional path in $\mathcal{T}$ from infinity to $P$ and
$\Omega$ be the sectional path in $\mathcal{T}$ from $X$ to
infinity. Then $\Sigma$ intersects $\Omega$ and there is a common
module $Y$ of $\Sigma$ and $\Omega$ different from $X$ and $P$.
Denote by $Z$ the immediate successor of $Y$ on the sectional path
from $Y$ to $P$. Then $\tau_AZ$ is the immediate predecessor of
$Y$ on the sectional path from $X$ to $Y$. Note that all
irreducible maps corresponding to the arrows on rays in
$\mathcal{T}$ are monomorphisms. Therefore, considering the
composition of irreducible monomorphisms corresponding to arrows
of $\Omega$ forming its subpath from $X$ to $\tau_AZ$ and using
our assumption $\Hom_A(M,X)\neq 0$, we get $\Hom_A(M,\tau_AZ)\neq
0$. Further, since $M$ is sincere and not the middle of a short
chain, $M$ is faithful by \cite[Corollary 3.2]{[RSS]}. Hence there
is a monomorphism $A_A\to M^r$ for some positive integer $r$, so a
monomorphism $P \rightarrow M^r$, because $P$ is a direct summand
of $A_A$. Considering the composition of irreducible homomorphisms
corresponding to arrows of $\Sigma$ forming its subpath from $Z$
to $P$, we receive that $\Hom_A(Z,P)\neq 0$, by a result of R.
Bautista and S. O. Smal{\o} \cite{[BS]}. Hence, $\Hom_A(Z,M^r)\neq
0$, and consequently $\Hom_A(Z,M) \neq 0$. Summing up, we have in
$\mod A$ a short chain $Z\to M\to \tau_AZ$, a contradiction. This
finishes our proof.
\end{proof}

\section{Proof of Theorem \ref{thm1}}

Let $A$ be an artin algebra over a commutative artin ring $R$. We
may assume (without loss of generality) that $A$ is basic and
indecomposable.

Assume first that $A$ is a tilted algebra, that is,
$A=\End_{H}(T)$ for a basic indecomposable hereditary artin
algebra $H$ over $R$ and a multiplicity-free tilting module $T$ in
$\mod H$. Then the tilting $H$-module $T$ determines the torsion
pair $(\mathcal{F}(T), \mathcal{T}(T))$ in $\mod H$, with the
torsion-free part $\mathcal{F}(T)=\{X \in \mod H |
\Hom_H(T,X)=0\}$ and the torsion part $\mathcal{T}(T)=\{X \in \mod
H | \Ext^1_H(T,X)=0\}$, and the splitting torsion pair
$(\mathcal{Y}(T), \mathcal{X}(T))$ in $\mod A$, with the
torsion-free part $\mathcal{Y}(T)=\{Y \in \mod A|
\Tor^A_1(Y,T)=0\}$ and the torsion part $\mathcal{X}(T)=\{Y \in
\mod A| Y \otimes_A T=0\}$. Then, by the Brenner-Butler theorem,
the functor $\Hom_H(T,-): \mod H \to \mod A$ induces an
equivalence of $\mathcal{T}(T)$ with $\mathcal{Y}(T)$, and the
functor $\Ext^1_H(T,-): \mod H \to \mod A$ induces an equivalence
of $\mathcal{F}(T)$ with $\mathcal{X}(T)$ (see \cite{[BB]},
\cite{[HRi1]}). Further, the images $\Hom_H(T,I)$ of the
indecomposable injective modules $I$ in $\mod H$ via the functor
$\Hom_H(T,-)$ belong to one component $\mathcal{C}_T$ of
$\Gamma_A$, called the connecting component of $\Gamma_A$
determined by $T$, and form a faithful section $\Delta_T$ of
$\mathcal{C}_T$, with $\Delta_T$ the opposite valued quiver
$Q^{\op}_H$ of the valued quiver $Q_H$ of $H$. Recall that a full
connected valued subquiver $\Sigma$ of a component $\mathcal{C}$
of $\Gamma_A$ is called a section if $\Sigma$ has no oriented
cycles, is convex in $\mathcal{C}$, and intersects each
$\tau_A$-orbit of $\mathcal{C}$ exactly once. Moreover, the
section $\Sigma$ is faithful provided the direct sum of all
modules lying on $\Sigma$ is a faithful $A$-module. The section
$\Delta_T$ of the connecting component $\mathcal{C}_T$ of
$\Gamma_A$ has the distinguished property: it connects the
torsion-free part $\mathcal{Y}(T)$ with the torsion part
$\mathcal{X}(T)$, because every predecessor in $\ind A$ of a
module $\Hom_H(T,I)$ from $\Delta_T$ lies in $\mathcal{Y}(T)$ and
every successor of $\tau^-_A \Hom_H(T,I)$ in $\ind A$ lies in
$\mathcal{X}(T)$. Let $M_T$ be the direct sum of all modules lying
on $\Delta_T$. Then $M_{T}=\Hom_{H}(T,D(H))$ and is a sincere
$A$-module. Suppose $M_T$ is the middle of a short chain $X\to
M_T\to \tau_AX$ in $\mod A$. Then $X$ is a predecessor of an
indecomposable direct summand $M'$ of $M_T$ in $\ind A$ and
consequently $M_T\in \mathcal{Y}(T)$ forces $X\in \mathcal{Y}(T)$.
Hence $\tau_AX$ also belongs to $\mathcal{Y}(T)$, since
$\mathcal{Y}(T)$ is closed under predecessors in $\ind A$. In
particular, $\tau_AX$ does not lie on $\Delta_T$. Then
$\Hom_A(M_T,\tau_AX)\neq 0$ implies that there is an
indecomposable direct summand $M''$ of $M_T$ such that $\tau_AX$
is a successor of $\tau^{-1}_AM''$ in $\ind A$. But then
$\tau^{-1}_AM'' \in \mathcal{X}(T)$ forces that $\tau_AX \in
\mathcal{X}(T)$, because $\mathcal{X}(T)$ is closed under
successors in $\ind A$. Hence $\tau_AX$ is simultaneously in
$\mathcal{Y}(T)$ and $\mathcal{X}(T)$, a contradiction. Thus
$M_{T}$ is, as wanted,
a sincere $A$-module which is not the middle of a short chain.\\

Conversely, assume that $M$ is a sincere module in $\mod A$ which is not the middle of a short chain. Then, by \cite[Theorem 3.5 and Proposition 3.6]{[RSS]},
we have that $\gldim A\leq 2$ and $\pd_{A}X\leq 1$ or $\id_{A}X\leq 1$ for any indecomposable module $X$ in $\mod A$. Thus $A$ is a quasitilted  algebra by
the characterization given by D. Happel, I. Reiten and S. O. Smal{\o} in \cite[Theorem 2.3]{[HRS2]}.

We shall now assume that $A$ is not a tilted algebra and prove that this leads to a contradiction.

Let $A$ be a quasitilted algebra which is not tilted. Then by the
result of D. Happel and I. Reiten \cite{[HRe]}, $A$ is a
quasitilted algebra of canonical type. Hence, following
\cite{[LS]}, the Auslander-Reiten quiver $\Gamma_{A}$ of $A$ has a
disjoint union decomposition of the form
\[\Gamma_{A}=\mathcal{P}^{A}\vee\mathcal{T}^{A}\vee\mathcal{Q}^{A},\]
where $\mathcal{T}^{A}$ is a separating family of pairwise
orthogonal generalized standard semi\-regular (ray or coray)
tubes. Note that the separating family $\mathcal{T}^{A}$ satisfies
the following conditions:
$\Hom_{A}(\mathcal{T}^{A},\mathcal{P}^{A})=0$,
$\Hom_{A}(\mathcal{Q}^{A},\mathcal{T}^{A})=0$,
$\Hom_{A}(\mathcal{Q}^{A},\mathcal{P}^{A})=0$ and every
homomorphism $f:U\rightarrow V$ with $U$ in $\mathcal{P}^{A}$ and
$V$ in $\mathcal{Q}^{A}$ factorizes through a module $W$ from the
additive category $\add(\mathcal{T})$ of any tube $\mathcal{T}$ of
$\mathcal{T}^{A}$.

Let $N$ be an indecomposable direct summand of $M$. Since $N$ is
not the middle of a short chain, $N$ does not lie on a short cycle
$X\to N \to X$ in $\ind A$ \cite[Lemma 1]{[HL]}, and consequently
$N$ cannot belong to any stable tube of $\Gamma_{A}$. Thus, in
view of Lemma \ref{lem1}, we conclude that $M$ has no direct
summands in $\mathcal{T}^{A}$. Therefore $M=M_P\oplus M_Q$, where
$M_P$ is the maximal direct summand of $M$ from the additive
category $\add(\mathcal{P}^{A})$ of $\mathcal{P}^A$ and $M_Q$ is
the maximal direct summand of $M$ from the additive category
$\add(\mathcal{Q}^{A})$ of $\mathcal{Q}^A$. We claim that either
$M_P=0$ or $M_Q=0$. Assume that $M_P\neq 0$ and $M_Q\neq 0$.
There are three cases to consider (by the structure of the family $\mathcal{T}^{A}$).\\

\textbf{Case 1.} $\mathcal{T}^{A}$ is a separating family of
stable tubes. By \cite[Theorem 1.1]{[LP]}, it follows that $A$ is
a concealed canonical algebra. Recall that a concealed canonical
algebra is an algebra of the form $B=\End_{\Lambda}(T)$, where
$\Lambda$ is a canonical algebra in the sense of C. M. Ringel
\cite{[Ri3]} and $T$ is a multiplicity-free tilting module in the
additive category $\add(\mathcal{P}^{\Lambda})$, for the canonical
decomposition $\Gamma_{\Lambda}= \mathcal{P}^{\Lambda} \vee
\mathcal{T}^{\Lambda} \vee \mathcal{Q}^{\Lambda}$ of
$\Gamma_{\Lambda}$, with $\mathcal{T}^{\Lambda}$ the canonical
infinite separating family of stable tubes of $\Gamma_{\Lambda}$.
Then $\mathcal{P}^{A}$ is a family of components containing all
the indecomposable projective $A$-modules, $\mathcal{Q}^{A}$ is a
family of components containing all the indecomposable injective
$A$-modules and $\mathcal{T}^{A}$ separates $\mathcal{P}^{A}$ from
$\mathcal{Q}^{A}$. Let $M'$ be an indecomposable direct summand of
$M_P$ and $M''$ be an indecomposable direct summand of $M_Q$. In
this situation we have that $\Hom_{A}(M',\mathcal{Q}^{A})\neq 0$,
because for the injective hull $u:M'\rightarrow E_{A}(M')$ of
$M'$, $E_{A}(M')$ is contained in $\add(\mathcal{Q}^{A})$. Dually,
$\Hom_{A}(\mathcal{P}^{A},M'')\neq 0$, since for the projective
cover $h:P_{A}(M'')\rightarrow M''$ of $M''$, $P_{A}(M'')$ belongs
to $\add(\mathcal{P}^{A})$. Thus there are indecomposable
$A$-modules $X$ and $Y$ in an arbitrary given stable tube
$\mathcal{T}$ from $\mathcal{T}^{A}$ such that $\Hom_{A}(M',X)\neq
0$ and $\Hom_{A}(Y,M'')\neq 0$.

Let $\Sigma$ be the infinite sectional path in $\mathcal{T}$ which starts at $X$ and $\Omega$ be the infinite sectional path in $\mathcal{T}$ which
terminates at $Y$. Note that since
$\mathcal{T}$ is a stable tube, all morphisms corresponding to arrows of $\Sigma$ are monomorphisms, whereas all morphisms
which correspond to arrows of $\Omega$ are epimorphisms.
Further, $\Sigma$ intersects $\Omega$ and there is a common module $V$ different from $X$ and $Y$.
Denote by $Z$ the immediate successor of $V$ on the path $\Omega$. Then $\tau_AZ$ is the immediate predecessor of $V$ on the path $\Sigma$.
Thus there are a nonzero monomorphism from $X$ to $\tau_AZ$, which is a composition of irreducible monomorphisms, and
a nonzero epimorphism from $Z$ to $Y$ being a composition of irreducible epimorphisms. Therefore,
$\Hom_A(M', \tau_AZ)\neq 0$ and $\Hom_A(Z, M'')\neq 0$. In that way
we obtain a short chain $Z\rightarrow M\rightarrow\tau_A Z$ in $\mod A$, which contradicts the assumption imposed on $M$.\\

\textbf{Case 2.} There exists an indecomposable projective
$A$-module belonging to $\mathcal{T}^{A}$, but there are no
indecomposable injective modules in $\mathcal{T}^{A}$. Thus $A$ is
almost concealed canonical, by \cite[Corollary 3.5]{[LS]}, and
$\mathcal{Q}^A$ contains all indecomposable injective modules.
Recall that  it means that $A=\End_{\Lambda}(T)$ for a canonical
algebra $\Lambda$ and a tilting module $T$ from the additive
category $\add(\mathcal{P}^{\Lambda}\cup \mathcal{T}^{\Lambda})$
of $\mathcal{P}^{\Lambda}\cup \mathcal{T}^{\Lambda}$, where
$\Gamma_{\Lambda}=\mathcal{P}^{\Lambda} \vee \mathcal{T}^{\Lambda}
\vee \mathcal{Q}^{\Lambda}$ is the canonical decomposition of
$\Gamma_{\Lambda}$ with $\mathcal{T}^{\Lambda}$ the canonical
separating family of stable tubes.

Let $M'$ be an indecomposable direct summand of $M_P$ and $\mathcal T$ be a ray tube of the family $\mathcal{T}^{A}$ which contains a projective module.
Again, for an injective hull $M'\rightarrow E_{A}(M')$ of $M'$ in $\mod A$, we have $E_{A}(M')\in \add(\mathcal{Q}^{A})$.
Using now the separation property of $\mathcal{T}^A$ we conclude that $\Hom_A(M',X) \neq 0$
for a module $X$ in $\mathcal{T}$, which leads to a contradiction with Lemma \ref{lem1}. Therefore, $M_P=0$. Applying
dual arguments we show that, if $\mathcal{T}^{A}$ contains an indecomposable injective $A$-module but no indecomposable projective $A$-modules belong to
$\mathcal{T}^{A}$, then $M_Q=0$.\\

\textbf{Case 3.} There exists a ray tube $\mathcal{T}'$ in $\mathcal{T}^{A}$ containing a projective $A$-module and a coray tube $\mathcal{T}''$ in
$\mathcal{T}^{A}$ containing an injective $A$-module. In this case, $A$ is a semiregular branch enlargement of a concealed canonical algebra $B$
with respect to a separating family $\mathcal{T}^B$ of stable tubes in $\Gamma_B$
(see \cite[Theorem 3.4]{[LS]}). Moreover, $A$ admits quotient algebras $A_l$ and $A_r$ such that $A_l^{\op}$ is almost concealed canonical with
$\mathcal{P}^A=\mathcal{P}^{A_l}$ and $A_r$ is almost concealed canonical with $\mathcal{Q}^A=\mathcal{Q}^{A_r}$.
We note that $A_l$ is a branch coextension of $B$ with respect to $\mathcal{T}^B$ and $A_r$ is a branch extension of $B$
with respect to $\mathcal{T}^B$.

We start with the observation that the coray tube $\mathcal{T}''$
contains an injective module $I$ such that $I\slash \soc I$ has an
indecomposable direct summand $X$ which is a $B$-module lying in a
stable tube of $\mathcal{T}^B$. Since $M$ is faithful (see again
\cite[Corollary 3.2]{[RSS]}), there is an epimorphism
$M^{s}\rightarrow I\slash \soc I $, for some positive integer $s$,
and hence an epimorphism $(M_P)^{s}\rightarrow I\slash \soc I$,
because $\Hom_A(\mathcal{Q}^A, \mathcal{T}^A)=0$. Clearly, then
there is also an epimorphism $(M_P)^s \rightarrow X$. Choose a
simple $B$-module $S$ such that there is an epimorphism $X
\rightarrow S$. Hence there is an epimorphism $M_P \rightarrow S$.
Let $M'$ be a direct summand of $M_P$ satisfying the condition
$\Hom_{A}(M',S)\neq 0$. Since $S$ is a $B$-module, for an
injective hull $S\rightarrow E_{A}(S)$ of $S$ in $\mod A$, we
conclude that $E_{A}(S)$ is an injective $A_{r}$-module, and so
$E_{A}(S)\in \mathcal{Q}^{A}$. Therefore, we obtain that
$\Hom_{A}(M',E_{A}(S))\neq 0$ with $E_A(S)$ in $\mathcal{Q}^A$.
Similarly, we may take an indecomposable projective module $P$ in
the ray tube $\mathcal{T}'$ such that $\rad P$ has a direct
summand $Y$ which is a $B$-module. Let $T$ be a simple submodule
of $Y$. Since there is a monomorphism $A_A \rightarrow M^r$, we
get a monomorphism $P\rightarrow (M_Q)^r$, because
$\Hom_A(\mathcal{T}^A, \mathcal{P}^A)=0$, and consequently there
is an embedding $T \rightarrow (M_Q)^r $. Then
$\Hom_{A}(P_{A}(T),M'')\neq 0$ holds for some direct summand $M''$
of $M_Q$ and the $A_l$-module $P_A(T) \in \mathcal{P}^A$ such that
$P_{A}(T)\rightarrow T$ is a projective cover of $T$.

Since $\mathcal{T}^A$ separates $\mathcal{P}^A$ from $\mathcal{Q}^A$ in $\mod A$, any nonzero homomorphism
from $M'$ to $E_A(S)$ factorizes through $\mathcal{T'}$ and any
nonzero homomorphism from $P_A(T)$ to $M''$ has a factorization
through $\mathcal{T''}$. But this contradicts Lemma \ref{lem1}.

Summing up, we have shown that:
\begin{itemize}
    \item[$\bullet$] if $M_P\neq 0$, then $\mathcal{T}^{A}$ is a separating family of coray tubes (thus without projectives),
    \item[$\bullet$] if $M_Q\neq 0$, then $\mathcal{T}^{A}$ is a separating family of ray tubes (thus without injectives),
    \item[$\bullet$] either $M_P=0$ or $M_Q=0$.
\end{itemize}
By duality, we may assume that $M_P=0$. Then, $M=M_Q\in
\add(\mathcal{Q}^{A})$ and $\mathcal{T}^{A}$ is a separating
family of ray tubes. Equivalently, $A$ is an almost concealed
canonical algebra.

By general theory of quasitilted algebras of canonical type
(\cite{[LP]}, \cite{[LS]}), the class of almost concealed
canonical algebras divides into three classes: almost concealed
canonical algebras of Euclidean type, tubular type and wild type.
According to the assumption that $A$ is not a tilted algebra, thus
not of Euclidean type, we shall consider only the case of tubular
type and of wild type. \\

\textbf{(i)} Suppose first that $A$ is an almost concealed
canonical algebra of tubular type. Then, by \cite[Chapter
5]{[Ri1]} (also \cite{[Ri3]}), the shape of the Auslander-Reiten
quiver $\Gamma_A$ of $A$ is as follows
\[\Gamma_A= \mathcal{P}(A) \vee \mathcal{T}^A_0 \vee (\bigvee_{q \in \mathbb{Q}^+} \mathcal{T}^A_q) \vee
\mathcal{T}^A_{\infty} \vee \mathcal{Q}(A),\] where
$\mathcal{P}(A)$ is a preprojective component with a Euclidean
section, $\mathcal{Q}(A)$ is a preinjective component with a
Euclidean section, $\mathcal{T}^A_0$ is an infinite family of
pairwise orthogonal generalized standard ray tubes containing at
least one indecomposable projective $A$-module,
$\mathcal{T}^A_{\infty}$ is an infinite family of pairwise
orthogonal generalized standard coray tubes containing at least
one indecomposable injective $A$-module, and $\mathcal{T}^A_q$,
for $q$ in  the set of positive rational numbers $\mathbb{Q}^+$,
is an infinite family of pairwise orthogonal generalized standard
faithful stable tubes. Moreover, for $r<s$ in $\mathbb{Q}^+\cup
\{0,\infty\}$, we have $\Hom_A(\mathcal{T}^A_s,\mathcal{T}^A_r)=0$.

We claim that $M \in \add(\mathcal{Q}(A))$. Let $N$ be an indecomposable direct summand of $M$.
Observe that because $N$ does not lie on a short cycle
in $\mod A$, $N$ cannot belong to any stable tube from the  families
$\mathcal{T}^A_q$. Moreover, by Lemma \ref{lem1}, $N$ is neither in a ray tube
of $\mathcal{T}^A_0$ with an indecomposable projective $A$-module nor in a coray tube of
$\mathcal{T}^A_{\infty}$ with an indecomposable injective $A$-module.
Since $M\in \add(\mathcal{Q}^{A})$, we obtain that
$N \in \mathcal{Q}(A)$, and hence $M\in\add (\mathcal{Q}(A))$.\\

Let $I$ be an indecomposable injective $A$-module from the family $\mathcal{T}_{\infty}^{A}$. Since $M$ is a faithful module, there is an epimorphism
of the form $M^{s}\rightarrow I$ for some positive integer $s$. On the other hand, $\Hom_A(\mathcal{Q}(A), \mathcal{T}^A_{\infty})=0$ forces
$\Hom_A(M,I)=0$, a contradiction. This fact excludes the class of almost concealed canonical algebras of tubular type from the class of algebras
which admit a sincere finitely generated module which is not the middle of a short chain. \\

\textbf{(ii)} Let now $A$ be an almost concealed canonical algebra
of wild type. Then, by \cite{[LS]} and \cite{[M]}, $\Gamma_{A}$ is
of the form
 \[\Gamma_A= \mathcal{P}^A \vee \mathcal{T}^A \vee
\mathcal{Q}^A,\]
 where $\mathcal{P}^A$ consists of a unique preprojective component $\mathcal{P}(A)$ and an infinite family of components
obtained from components of the form $\mathbb{ZA}_{\infty}$ by a
finite number (possibly empty) of ray insertions, $\mathcal{Q}^A$
consists of a unique preinjective component $\mathcal{Q}(A)$ and
an infinite family of components obtained from components of the
form $\mathbb{ZA}_{\infty}$ by a finite number (possibly empty) of
coray insertions. By \cite{[M]},  $\mathcal{Q}(A)$ is a
preinjective component $\mathcal{Q}(C)$ for some wild concealed
algebra $C$, which is an indecomposable quotient algebra of $A$.
Thus $C=\End_{H}(T)$ for an indecomposable wild hereditary algebra
$H$ and a tilting $H$-module $T$ from the additive category $\add
(\mathcal{P}(H))$ of the preprojective component $\mathcal{P}(H)$
of $\Gamma_{H}$. Moreover, $\mathcal{Q}^{A}$ contains at least one
injective module which does not belong to $\mathcal{Q}(C)$. In
fact, there is an indecomposable injective $A$-module $I$ in
$\mathcal{Q}^A\backslash\mathcal{Q}(A)$ such that $I/ \soc I$ has
an indecomposable direct summand $X$ which is a regular
$C$-module. Denote by $\mathcal{D}$ the component of $\Gamma_A$
that  $I$ belongs to. Since $M$ is a faithful $A$-module, there is
an indecomposable direct summand $M_1$ of $M$ such that
$\Hom_A(M_1,I) \neq 0$. Clearly, $M_1$ belongs to a component, say
$\mathcal{C}$, from $\mathcal{Q}^A \backslash \mathcal{Q}(A)$
($\mathcal{C}$ may be equal to
$\mathcal{D}$).\\

Applying \cite[Theorem 6.4]{[M]}, we obtain that there exists an
indecomposable module $V$ in $\mathcal{C}$ such that the left cone
$(\rightarrow V)$ of $V$ in $\mathcal{C}$ (of all predecessors of
$V$ in $\mathcal{C}$) consists entirely of indecomposable regular
$C$-modules and the restriction of $\tau_A$ to $(\rightarrow V)$
coincides with $\tau_C$. We note also that, by tilting theory,
there is an equivalence $\xymatrix{\add(\mathcal{R}(H))
\ar[r]^(.49){\sim}& \add(\mathcal{R}(C))}$ of the additive
categories of regular modules
over $H$ and over $C$, induced by the functor $\Hom_H(T,-): \mod H \rightarrow \mod C $.\\

Let $Y$ be a quasi-simple regular $C$-module from the left cone
$(\rightarrow V)$ of $\mathcal{C}$. Then, by a result of D. Baer
\cite{[Bae]} (see also \cite[Chapter XVIII, Theorem 2.6]{[SS2]}),
there exists a positive integer $m_0$ such that $\Hom_A(X,
\tau_A^mY)=\Hom_C(X, \tau^m_CY)$ $\neq 0$ for all integers $m \geq
m_0$. Consider now the infinite sectional path $\Sigma$ in
$\mathcal{C}$ which terminates at $M_1$. Then there exists $m \geq
m_0$ such that the infinite sectional path $\Omega$ in
$\mathcal{C}$ which starts at $\tau^m_AY=\tau^m_CY$ contains a
module $\tau_A Z$ with $Z$ lying on $\Sigma$. In particular,
applying \cite{[BS]}, we get that $\Hom_A(Z,M_1) \neq 0$ and
consequently $\Hom_A(Z,M) \neq 0$. We shall show that also
$\Hom_A(M, \tau_AZ) \neq 0$. Since $\tau_AZ$ belongs to $\Omega$,
there exists in $\mod A$ a short exact sequence
\[0 \to \tau_A^mY \to \tau_AZ \to W \to 0 \]
with $\tau_AW$ on $\Omega$ (see \cite[Corollary 2.2]{[AS0]}), and
hence a monomorphism from $\tau^m_AY$ to $\tau_AZ$. Observe that
there are in $\mod A$ epimorphisms $M^s\to I$ and $I\to X$.
Further, since $\Hom_A(X, \tau^m_AY) \neq 0$, we have $\Hom_A(M^s,
\tau^m_AY) \neq 0$, and hence $\Hom_A(M^s, \tau_AZ) \neq 0$.
Therefore, we get $\Hom_A(M,\tau_AZ)\neq 0$. This shows that $M$
is the middle of a short chain in $\mod A$. Hence any almost
concealed canonical algebra of wild type does not admit a sincere
finitely generated module which is not the middle of a short
chain.

This finally contradicts the assumption that $A$ is not tilted, which  finishes the proof.

%


\begin{thebibliography}{99}
%
\bibitem{[ASS]} Assem, I., Simson, D., Skowro\'nski, A.: Elements of the Representation Theory of Associative Algebras 1:
Techniques of Representation Theory, London Mathematical Society
Student Texts, vol. 65, Cambridge University Press, Cambridge
(2006)
%
\bibitem{[AS0]} Assem, I. Skowro\'nski, A.: Minimal representation-infinite coil algebras, Manuscripta Math. {\bf 67}, 305--331 (1990)
%
\bibitem{[AR]} Auslander, M., Reiten, I.: Modules determined by their composition factors, Illinois J. Math. {\bf 29}, 280-301 (1985)
%
\bibitem{[ARS]} Auslander, M., Reiten, I., Smal{\o}, S. O.: Representation Theory of Artin Algebras, Cambridge Studies Advanced Math., vol. 36,
 Cambridge University Press, Cambridge, (1995)
%
\bibitem {[Bae]} Baer, D.: Wild hereditary artin algebras and linear methods, Manuscripta Math. {\bf 55}, 69-82  (1986)
%
\bibitem{[Bak]} Bakke, \O.: Some characterizations of tilted algebras, Math. Scand. {\bf 63}, 43--50 (1988)
%
\bibitem{[BS]} Bautista, R., Smal{\o}, S. O.: Nonexistent cycles, Comm. Algebra {\bf 11}, 1755--1767 (1983)
%
\bibitem{[Bon]} Bongartz, K.: Tilted algebras, In: Representations of Algebras, Lecture Notes in Math., vol. 903, pp.\ 26--38,
                 Springer-Verlag, Berlin, Heidelberg, New York (1981)
%
\bibitem{[BB]} Brenner, S., Butler, M. C. R.: Generalizations of the Bernstein-Gelfand-Ponomarev reflection functors, In: Representation Theory II,
                    Lecture Notes in Math., vol. 832, pp.\ 103--169, Springer-Verlag, Berlin, Heidelberg, New York (1980)
%
\bibitem {[HL]} Happel, D., Liu, S.: Module categories without short cycles are of finite type, Proc. Amer. Math. Soc. {\bf 120}, 371--375 (1994)
%
\bibitem{[HRe]} Happel, D., Reiten, I.: Hereditary abelian categories with tilting object over arbitrary basic fields, J. Algebra {\bf 256}, 414-432 (2002)
%
\bibitem{[HRS2]} Happel, D., Reiten, I., Smal{\o}, S. O.: Tilting in abelian categories and quasitilted algebras, Memoires Amer. Math. Soc.
                         {\bf 120}, no. 575 (1996)
%
\bibitem{[HRi1]} Happel, D., Ringel, C. M.: Tilted algebras, Trans. Amer. Math. Soc. {\bf 274}, 399--443 (1982)
%
\bibitem {[JMS]} Jaworska, A., Malicki, P., Skowro\'nski, A.: On Auslander-Reiten components of algebras without external short paths,
J. London Math. Soc., in press
%
\bibitem{[LP]} Lenzing, H., de la Pe\~na, J. A.: Concealed-canonical algebras and separating tubular families,
                    Proc. London Math. Soc. {\bf 78}, 513-540  (1999)
%
\bibitem{[LS]} Lenzing, H., Skowro\'nski, A.: Quasi-tilted algebras of canonical type, Colloq. Math. {\bf 71}, 161--181 (1996)
%
\bibitem{[L0]} Liu, S.: Degrees of irreducible maps and the shapes of the Auslander-Reiten quivers, J. London Math. Soc. {\bf 45}, 32--54 (1992)
%
\bibitem{[L1]} Liu, S.: Semi-stable components of an
                           Auslander-Reiten quiver, J. London Math. Soc. {\bf 47}, 405--416 (1993)
%
\bibitem{[L2]} Liu, S.: Tilted algebras and generalized standard Auslander-Reiten components, Archiv. Math. (Basel) {\bf 61}, 12--19 (1993)
%
\bibitem{[MS]} Malicki, P., Skowro\'{n}ski, A.: Algebras with separating Auslander-Reiten components, In:
                   Representations of Algebras and Related Topics, European Math.
                   Soc. Series of Congress Reports, European Math. Soc. Publ. House,
                   Z\"{u}rich, pp.\ 251--353 (2011)
%
\bibitem{[M]} Meltzer, H.: Auslander-Reiten components of concealed-canonical algebras, Colloq. Math. {\bf 71}, 183-202 (1996)
%
\bibitem{[RSS]} Reiten, I., Skowro\'{n}ski, A., Smal{\o} S. O.: Short chains and short cycles of modules, Proc. Amer. Math. Soc. {\bf 117}, 343-354 (1993)
%
\bibitem{[Ri1]} Ringel, C. M.: Tame Algebras and Integral Quadratic Forms, Lecture Notes in Math., vol. 1099, Springer-Verlag, Berlin, Heidelberg,
New York (1984)
%
\bibitem{[Ri2]} Ringel, C. M.: Representation theory of finite dimensional algebras, In: Representations of Algebras, London Math. Soc.
Lecture Note Series, vol. 116, pp. 7-79, Cambridge Univiversity
Press, Cambridge (1986)
%
\bibitem{[Ri3]} Ringel, C. M.: The canonical algebras, with an appendix by W. Crawley-Boevey, In: Topics in Algebra,
Banach Center Publ., vol. 26,  Part 1, pp. 407--432, PWN Warsaw
(1990)
%
\bibitem{[SS1]} Simson, D., Skowro\'nski, A.: Elements of the Representation Theory of Associative Algebras 2:
Tubes and Concealed Algebras of Euclidean Type, London Mathematical Society Student Texts, vol. 71, Cambridge University Press, Cambridge (2007)
%
\bibitem{[SS2]} Simson, D., Skowro\'nski, A.: Elements of the Representation Theory of Associative Algebras 3:
Representation-Infinite Tilted Algebras, London Mathematical Society Student Texts, vol. 72, Cambridge University Press, Cambridge (2007)
%
\bibitem{[S1]} Skowro\'nski, A.: Generalized standard Auslander-Reiten components without oriented cycles, Osaka J. Math. {\bf 30}, 515--527 (1993)
%
\bibitem{[S2]} Skowro\'nski, A.: Generalized standard Auslander-Reiten components, J. Math. Soc. Japan {\bf 46}, 517--543 (1994)
%
\bibitem{[S3]} Skowro\'nski, A.: Regular Auslander-Reiten components containing directing modules, Proc. Amer. Math. Soc. {\bf 120}, 19--26 (1994)
%
\bibitem{[S4]} Skowro\'nski, A.: Directing modules and double tilted algebras, Bull. Polish Acad. Sci. Mathematics {\bf 50}, 77--87 (2002)
%
\bibitem{[S5]} Skowro\'nski, A.: Selfinjective algebras: finite and tame type,
               In: Trends in Representation Theory of Algebras and Related
               Topics, Contemp. Math. {\bf 406}, Amer. Math. Soc., Providence,
               RI, 169--238 (2006)
%
\bibitem {[SY1]} Skowro\'nski, A., Yamagata K.: Galois coverings
                of selfinjective algebras by repetitive algebras, Trans. Amer.
                Math. Soc. {\bf 351}, 715--734 (1999)
%
\bibitem {[SY2]} Skowro\'nski, A., Yamagata K.: On selfinjective
                 artin algebras having nonperiodic generalized standard
                 Auslander-Reiten components, Colloq. Math. {\bf 96}, 235--244
                 (2003)
%
\bibitem {[SY3]} Skowro\'nski, A., Yamagata K.: Selfinjective algebras of quasitilted type, In: Trends in Representation Theory of Algebras and Related Topics,
European Math. Soc. Series of Congress Reports, European Math.
Soc. Publ. House, Zurich, pp. 639-708 (2008)
%

\bibitem {[Z]} Zhang, Y.: The structure of stable components,
Canad. J. Math. {\bf 43}, 652--672 (1991)
%
\end{thebibliography}
\end{document}